\newtheorem{theorem}{Theorem}[section]
\newtheorem{lemma}[theorem]{Lemma}
\newtheorem{proposition}[theorem]{Proposition}
\theoremstyle{definition}
\newtheorem{question}[theorem]{Question}
\numberwithin{equation}{section}
\begin{document}

\baselineskip=15pt

\title[Moduli space of $G$--connections on an elliptic curve]{Moduli space of
$G$--connections on an elliptic curve}

\author[I. Biswas]{Indranil Biswas}

\address{School of Mathematics, Tata Institute of Fundamental
Research, Homi Bhabha Road, Bombay 400005, India}

\email{indranil@math.tifr.res.in}

\subjclass[2000]{14D20, 14H52, 53B15}

\keywords{Elliptic curve, $G$-connection, moduli space, Riemann-Hilbert correspondence}

\thanks{The author is supported by J. C. Bose Fellowship.}

\date{}

\begin{abstract}
Let $X$ be a smooth complex elliptic curve and $G$ a connected reductive affine
algebraic group defined over $\mathbb C$. Let ${\mathcal M}_X(G)$ denote the moduli
space of topologically trivial algebraic $G$--connections on
$X$, that is, pairs of the form $(E_G\, , D)$, where $E_G$ is a topologically trivial
algebraic principal $G$--bundle on $X$, and $D$ is an algebraic connection on $E_G$.
We prove that ${\mathcal M}_X(G)$ does not admit any nonconstant algebraic function
while being biholomorphic to an affine variety.
\end{abstract}

\maketitle

\section{Introduction}

Take an irreducible smooth complex projective curve $X$ of genus one and also
take a connected
reductive complex affine algebraic group $G$. The moduli space of topologically trivial
algebraic principal $G$--bundles on $X$ is extensive studied (see \cite{At1},
\cite{FM}, \cite{FMW}, \cite{La}, \cite{FGN} and references therein). In particular,
an explicit description of this moduli space was obtained. Our aim here is to study the
moduli space of algebraic $G$--connections on $X$. More precisely, let
${\mathcal M}_X(G)$ be the moduli space pairs of the form $(E_G\, ,D)$, where
\begin{itemize}
\item $E_G$ is an algebraic principal $G$--bundle on $X$ such that the underlying
topological principal $G$--bundle is trivial, and

\item $D$ is an algebraic connection on $E_G$.
\end{itemize}
(The definition of an algebraic connection on $E_G$ is recalled in Section
\ref{se2.2}.)
The Riemann--Hilbert correspondence, which sends any flat connection on $X$ to its
monodromy representation, produces a biholomorphism from ${\mathcal M}_X(G)$ to the
Betti moduli space $M^B_X(G)$ that parametrizes all equivalence classes of
homomorphisms from $\pi_1(X)$ to $G$ that lie in the connected component of the trivial
homomorphism (see Section \ref{se4.1}). The moduli space $M^B_X(G)$ is an affine
variety. Since ${\mathcal M}_X(G)$ is biholomorphic to $M^B_X(G)$, we conclude that
there are many nonconstant holomorphic functions on ${\mathcal M}_X(G)$. This contrasts
with the following theorem proved here (see Theorem \ref{thm1}):

\begin{theorem}\label{thm0}
The variety ${\mathcal M}_X(G)$ does not admit any nonconstant algebraic function. 
\end{theorem}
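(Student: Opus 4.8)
The plan is to reduce the statement to the case of a maximal torus $T\subset G$, where the corresponding moduli space becomes an \emph{anti-affine} commutative algebraic group, and then to transport the vanishing of global regular functions along a dominant morphism $\mathcal{M}_X(T)\to\mathcal{M}_X(G)$. So the first thing I would do is analyze $G=T\cong(\mathbb{C}^{*})^{r}$ with $r=\mathrm{rank}(G)$. A topologically trivial $T$--bundle is an $r$--tuple of degree--zero line bundles and a connection on it is an $r$--tuple of connections, so $\mathcal{M}_X(T)\cong\mathcal{M}_X(\mathbb{C}^{*})^{r}$. The space $\mathcal{M}_X(\mathbb{C}^{*})$ of pairs $(L,\nabla)$ with $L\in\mathrm{Pic}^{0}(X)$ maps, by forgetting $\nabla$, to $\mathrm{Pic}^{0}(X)\cong X$, with fibres the torsors of connections on a fixed $L$, each a torsor under $H^{0}(X,\Omega^{1}_X)\cong\mathbb{C}$; one identifies $\mathcal{M}_X(\mathbb{C}^{*})$ with the universal vector extension of the elliptic curve $\mathrm{Pic}^{0}(X)$, an extension of an abelian variety by a one--dimensional vector group classified by a nonzero element of $\mathrm{Ext}^{1}(\mathrm{Pic}^{0}(X),\mathbb{G}_a)=H^{1}(X,\mathcal{O}_X)$. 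Such a group is anti-affine: since the extension class is nonzero, the associated affine--line bundle over $\mathrm{Pic}^{0}(X)$ has no algebraic section, and a short induction on the fibrewise degree of a regular function (differentiating repeatedly along the translation--invariant vertical vector field and using nontriviality of the torsor to exclude fibrewise--linear functions) shows every regular function on it is constant. A finite product of varieties with only constant global functions again has only constant global functions, so $\mathcal{O}(\mathcal{M}_X(T))=\mathbb{C}$.

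Next I would construct the morphism. Extension of structure group along $T\hookrightarrow G$ sends $E_T$ to $E_G:=E_T\times_T G$ and a connection $\nabla_T$ on $E_T$ to the induced connection on $E_G$; this is algebraic and compatible with families, so it defines a morphism $\Phi\colon\mathcal{M}_X(T)\to\mathcal{M}_X(G)$, whose image lies in the component through the trivial connection because $\mathcal{M}_X(T)$ is connected and contains it. The claim is that $\Phi$ is dominant, and this rests on the geometry of $X$ at two places. First, by Atiyah's criterion (here one uses $\Omega^{1}_X\cong\mathcal{O}_X$, see \cite{At1}) a topologically trivial principal $G$--bundle admits an algebraic connection if and only if it is semistable; hence every bundle underlying a point of $\mathcal{M}_X(G)$ is semistable. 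Second, by the structure theory of semistable bundles on an elliptic curve (see \cite{FM}, \cite{La}), every semistable topologically trivial $G$--bundle is isomorphic to $E_T\times_T G$ for some $E_T$, with $E_T$ ranging over an $r$--dimensional space and the underlying--bundle map finite onto the moduli of semistable bundles; and for $E_T$ outside a proper closed subset one has $\mathrm{ad}(E_G)=\mathcal{O}_X^{\oplus r}\oplus\bigoplus_{\alpha}L_{\alpha}$ with all the degree--zero line bundles $L_{\alpha}$ nontrivial, whence $H^{0}(X,\mathrm{ad}(E_G))=\mathfrak{t}$ and, the connections on $E_G$ forming a torsor under $H^{0}(X,\mathrm{ad}(E_G))$ (again $\Omega^{1}_X\cong\mathcal{O}_X$), they are precisely the connections induced from $E_T$. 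A dimension count, $\dim\mathcal{M}_X(T)=2r=\dim\mathcal{M}_X(G)$, then shows that $\Phi$ maps onto a dense subset. In fact I expect one obtains the precise identification $\mathcal{M}_X(G)\cong\mathcal{M}_X(T)/W$ with $W$ the Weyl group, but dominance alone is all that is needed.

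The conclusion is then immediate: pullback of regular functions along the dominant morphism $\Phi$ is injective, so $\mathcal{O}(\mathcal{M}_X(G))\hookrightarrow\mathcal{O}(\mathcal{M}_X(T))=\mathbb{C}$, which forces $\mathcal{O}(\mathcal{M}_X(G))=\mathbb{C}$, and this is the theorem. (With the quotient description in hand one argues even more directly, $\mathcal{O}(\mathcal{M}_X(G))=\mathcal{O}(\mathcal{M}_X(T))^{W}=\mathbb{C}^{W}=\mathbb{C}$.)

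I expect the main obstacle to be the rigorous execution of the middle step: checking that $\Phi$ is a genuine morphism of the \emph{coarse} moduli varieties and, above all, that it is dominant. Dominance combines the classification of semistable $G$--bundles on an elliptic curve (reduction to a maximal torus) with the observation that a generic such bundle carries only torus--reduced connections, and one must keep track of the Weyl--group action to see that $\mathcal{M}_X(G)$ is, up to the finite quotient by $W$, the same anti-affine object built in the first step. The anti-affineness itself is classical but is the conceptual crux of the result: it is exactly what fails for the Dolbeault (Higgs bundle) moduli space, where the Hitchin map supplies nonconstant algebraic functions, so the statement genuinely uses that $D$ is a connection and not a Higgs field — consistent with the fact, noted in the introduction, that $\mathcal{M}_X(G)$ is nonetheless biholomorphic to an affine variety.
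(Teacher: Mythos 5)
Your overall strategy is the same as the paper's: prove that the rank--one moduli space $\mathcal M_X(\mathbb C^*)\,=\,\mathcal M_X$ has only constant regular functions, identify $\mathcal M_X(\mathcal T)$ with $(\mathcal M_X)^{\delta}$, and push the conclusion forward along a surjective (for you, dominant) morphism $\mathcal M_X(\mathcal T)\longrightarrow \mathcal M_X(G)$. Where you differ is in how the two key lemmas are established, and the differences are worth recording. For the torus step, the paper compactifies $\mathcal M_X$ to the $\mathbb P^1$--bundle $\mathbb P(V)$ over $J$ with $V$ the nonsplit self--extension of $\mathcal O_J$, identifies $\mathcal O_{\mathbb P}(nS)$ with $\mathcal O_{\mathbb P}(n)$, and quotes Atiyah's computation $\dim H^0(J,F_{n+1})=1$; you instead recognize $\mathcal M_X$ as the universal vector extension of $\operatorname{Pic}^0(X)$ and prove anti--affineness directly (the leading coefficient of a fibrewise polynomial is a global function on the projective base, hence constant, and the next coefficient would then produce an algebraic section of the torsor, contradicting Lemma \ref{lem3}). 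Your argument is self--contained and arguably more conceptual; the paper's trades the induction for a citation to \cite{At1}. Both are correct, modulo the routine check that the fibrewise degree of a regular function on the torsor is globally constant (immediate from the affine transition functions $t\mapsto t+\phi_{ij}$).

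The one place where your write--up has a genuine soft spot is the step you yourself flag: dominance of $\Phi$. The intermediate claim that \emph{every} semistable topologically trivial $G$--bundle is induced from a $\mathcal T$--bundle is false as stated: for $G=\mathrm{GL}(2,\mathbb C)$ the Atiyah bundle $F_2$ (the nonsplit self--extension of $\mathcal O_X$) is semistable of degree zero, carries connections, and admits no reduction to the diagonal torus; the statement is only true up to $S$--equivalence, i.e.\ at the level of the GIT/coarse moduli space. Your generic argument (for $E_{\mathcal T}$ with all the root line bundles $L_\alpha$ nontrivial one has $H^0(X,\mathrm{ad}(E_G))=\mathfrak t$, so the torsor of connections on $E_G$ is exactly the image of the torsor of connections on $E_{\mathcal T}$) does give that the image of $\Phi$ contains every pair whose underlying bundle is regular, but turning that into density requires knowing that such pairs are dense in $\mathcal M_X(G)$ --- which needs either irreducibility of $\mathcal M_X(G)$ or a closer look at how the moduli space identifies non--regular pairs with their semisimplifications. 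The paper sidesteps all of this by citing the surjectivity theorem of Thaddeus \cite[Theorem 4.1]{Th} for the map $\psi$, which is the clean fix; with that citation in place your argument closes, and the final step (injectivity of pullback of regular functions along a dominant morphism to a reduced variety) is correct as you state it.
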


Let $C$ be an irreducible smooth complex projective curve and
${\mathcal M}_C(G)$ the moduli space of $G$--connections on $C$ such that
the underlying topological principal $G$--bundle is trivial. As before,
${\mathcal M}_C(G)$ is biholomorphic to an affine variety, namely the
Betti moduli space. Theorem \ref{thm0} raises the following question:

\begin{question}
Does ${\mathcal M}_C(G)$ admit any nonconstant algebraic function?
\end{question}

\section{Algebraic connections on bundles}

\subsection{Connection on vector bundles}

Let $X$ be an irreducible smooth complex projective
curve of genus one. The holomorphic 
cotangent bundle of $X$ will be denoted by $K_X$. We note that $K_X$ is
algebraically trivial.

An \textit{algebraic connection} on an algebraic vector bundle 
$E\,\longrightarrow\, X$ is a holomorphic differential operator
$$
D\, :\, E\, \longrightarrow\, E\otimes K_X
$$
of order one satisfying the Leibniz rule which says that for any locally defined
holomorphic section $s$ of $E$ and any locally defined holomorphic function $f$ on $X$,
\begin{equation}\label{e1}
D(f\cdot s) \,=\, fD(s) + s\otimes df\, .
\end{equation}
Let $\overline{\partial}_E\, :\, E\, \longrightarrow\, E\otimes \Omega^{0,1}$ be 
the Dolbeault operator defining the holomorphic structure on $E$. It is 
straightforward to check that $D$ is an algebraic connection on $E$ if and only if 
$D+\overline{\partial}_E$ is a flat connection on $E$ such that the locally
defined (in analytic topology) flat sections of $E$ are holomorphic.

A rank $r$ {\it connection} on $X$ is a pair $(E\, , D)$, where 
$E$ is an algebraic vector bundle on $X$ of rank $r$, and $D$ is an algebraic 
connection on $E$.

An algebraic vector bundle $V$ on $X$ is called \textit{semistable} if
$$
\frac{\text{degree}(F)}{\text{rank}(F)}\,\leq\, \frac{\text{degree}(V)}{\text{rank}(V)}
$$
for all algebraic subbundles $F\,\subset\, V$ of positive rank.

\begin{lemma}\label{lem1}
Let $(E\, , D)$ be a rank $r$ connection on $X$. Then the 
algebraic vector bundle $E$ is semistable of degree zero.
\end{lemma}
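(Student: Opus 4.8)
The plan is to treat the two assertions in turn --- first that $\deg E = 0$, and then that $E$ is semistable --- deducing the latter from the former by means of a second-fundamental-form argument that exploits the triviality of $K_X$.

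To see that $\deg E = 0$: taking the trace of $D$ in local algebraic frames produces an algebraic connection on the determinant line bundle $\det E$ (the Leibniz rule \eqref{e1} is inherited). As recalled above, adding the Dolbeault operator turns an algebraic connection into a flat connection, so $\det E$ admits a flat connection; by Chern--Weil theory its real first Chern class vanishes, whence $\deg E = \deg(\det E) = 0$. Equivalently, one may cite Atiyah's criterion that a line bundle on a compact Riemann surface carries a holomorphic connection precisely when its degree is zero. Exactly the same argument applies to any algebraic subbundle $F \subset E$ with $D(F) \subset F \otimes K_X$: in that case $(F, D|_F)$ is again a connection, so $\deg F = 0$.

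For semistability, suppose to the contrary that $E$ is not semistable, and let $F \subsetneq E$ be its maximal destabilizing subbundle. Then $F$ is semistable with $\mu(F) = \mu_{\max}(E) > \mu(E) = 0$, and reading off the Harder--Narasimhan filtration of $E/F$ one gets $\mu_{\max}(E/F) < \mu(F)$. Composing $F \hookrightarrow E$ with $D$ and with the projection $E \otimes K_X \twoheadrightarrow (E/F) \otimes K_X$ gives the second fundamental form $\psi : F \to (E/F) \otimes K_X$, which is $\mathcal{O}_X$-linear by the Leibniz rule \eqref{e1}. Since $K_X$ is algebraically trivial we may regard $\psi$ as an $\mathcal{O}_X$-linear homomorphism $F \to E/F$. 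If $\psi \neq 0$, then $\mathrm{image}(\psi)$ is a nonzero quotient of the semistable bundle $F$, hence of slope $\geq \mu(F)$, while being a nonzero subsheaf of $E/F$ forces its slope to be $\leq \mu_{\max}(E/F) < \mu(F)$ --- a contradiction. Hence $\psi = 0$, i.e. $D(F) \subset F \otimes K_X$, and then the degree computation above gives $\deg F = 0$, contradicting $\mu(F) > 0$. Therefore $E$ is semistable.

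The one genuinely non-formal ingredient is the vanishing of the degree, which rests on Chern--Weil theory (or on Atiyah's theorem) rather than on anything combinatorial; granting that, the semistability part is routine, the only point needing a little care being that the maximal destabilizing subbundle is itself semistable and that the strict drop of slopes along the Harder--Narasimhan filtration of $E/F$ rules out any nonzero map $F \to E/F$.
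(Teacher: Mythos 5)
Your proof is correct and follows essentially the same strategy as the paper: degree zero via the flat connection (Chern--Weil/Atiyah), then, assuming non-semistability, the vanishing of the second fundamental form of the maximal destabilizing subbundle $F$ (using the triviality of $K_X$ and the slope gap between $F$ and $E/F$), which forces $\deg F = 0$ and contradicts $\mu(F) > 0$. Your spelled-out justification that $\mathrm{image}(\psi)$ would be simultaneously a quotient of the semistable $F$ and a subsheaf of $E/F$ is just a more explicit version of the step the paper states without elaboration.
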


\begin{proof}
Since $E$ admits a flat connection it follows immediately that $\text{degree}(E)\,=\,0$.

Assume that $E$ is not semistable. Let
$$
0\, \not=\, F\,\subsetneq\, E
$$
be the first term of the Harder--Narasimhan filtration of $E$, so the algebraic 
vector bundle $F$ is semistable, and
\begin{equation}\label{e2}
\frac{\text{degree}(F)}{\text{rank}(F)}\, > \frac{\text{degree}(V)}{\text{rank}(V)}
\end{equation}
for every algebraic subbundle $V\,\subset\, E/F$ of positive rank. Now consider 
the second fundamental form $S_D(F)$ for $F$, which is the following composition:
$$
F\,\hookrightarrow\, E\, \stackrel{D}{\longrightarrow}\, E\otimes K_X \,
\longrightarrow\, (E/F)\otimes K_X\, .
$$
{}From \eqref{e1} it follows immediately that $S_D(F)(fs)\,=\, fS_D(F)(s)$, where 
$s$ is any locally defined holomorphic section of $F$ and $f$ is any locally 
defined holomorphic function on $X$. Therefore, $S_D(F)$ is a holomorphic 
homomorphism of vector bundles. Since $K_X$ is trivial, from \eqref{e2} it follows 
that there is no nonzero holomorphic homomorphism of vector bundles from $F$ to 
$E/F$. So we conclude that $S_D(F)\,=\, 0$. Therefore, $D$ induces an algebraic 
connection on $F$. Since $F$ admits an algebraic connection, we have
$$
\text{degree}(F)\, =\, 0\, .
$$
On the other hand, since $\text{degree}(E)\,=\,0$, from \eqref{e2} it follows that
$$
\text{degree}(F)\, > \, 0\, .
$$
In view of the above contradiction, we conclude that $E$ is semistable.
\end{proof}

\begin{lemma}\label{lem2}
Let $E$ be a semistable vector bundle on $X$ of rank $r$ and degree zero.
Then $E$ admits an algebraic connection.
\end{lemma}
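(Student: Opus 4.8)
The plan is to reduce the assertion to a classical theorem of Weil and Atiyah describing which holomorphic vector bundles over a compact Riemann surface admit a holomorphic connection, and then to upgrade the holomorphic connection so produced to an algebraic one by means of GAGA. Recall the criterion: a holomorphic vector bundle $W$ over a compact Riemann surface admits a holomorphic connection if and only if every indecomposable direct summand of $W$ has degree zero. (On a curve such a holomorphic connection is automatically flat, since there are no holomorphic $2$--forms, but this fact will not be used.)

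First I would decompose $E$ as a direct sum $E\,=\,\bigoplus_{i=1}^{n}E_i$ of indecomposable algebraic vector bundles, and show that $\text{degree}(E_i)\,=\,0$ for every $i$. Each $E_i$ is an algebraic subbundle of $E$, so the semistability of $E$, together with $\text{degree}(E)\,=\,0$, yields $\text{degree}(E_i)/\text{rank}(E_i)\,\leq\,0$. On the other hand, $\bigoplus_{j\neq i}E_j$ is an algebraic subbundle of $E$ with $E\big/\bigoplus_{j\neq i}E_j\,\cong\,E_i$, so $E_i$ is a quotient bundle of $E$; since any quotient bundle of the semistable degree zero bundle $E$ has nonnegative slope (equivalently, any subbundle of the semistable bundle $E^{\ast}$ has nonpositive slope, and subbundles of $E^{\ast}$ are duals of quotient bundles of $E$), we get $\text{degree}(E_i)/\text{rank}(E_i)\,\geq\,0$. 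The two inequalities force $\text{degree}(E_i)\,=\,0$.

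By the Weil--Atiyah criterion, $E$ then admits a holomorphic connection $D_h$. To finish, recall that an algebraic connection on $E$ is the same thing as an algebraic splitting of the algebraic Atiyah exact sequence
$$
0\,\longrightarrow\,\text{End}(E)\,\longrightarrow\,\text{At}(E)\,\longrightarrow\,T_X\,\longrightarrow\,0\, ,
$$
i.e.\ an element of $\text{Hom}_{{\mathcal O}_X}(T_X,\text{At}(E))$ whose image in $\text{Hom}_{{\mathcal O}_X}(T_X,T_X)$ is $\text{Id}_{T_X}$; the holomorphic connection $D_h$ defines such an element for the analytification of this exact sequence. Since $X$ is a smooth complex projective curve, GAGA identifies the algebraic and the holomorphic homomorphisms between coherent sheaves on $X$, so $D_h$ is in fact algebraic. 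This proves the lemma.

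The only genuine input is the Weil--Atiyah criterion, which enters here as a citation rather than as a real difficulty. For a self-contained argument over the elliptic curve $X$, the point is to produce a connection on Atiyah's indecomposable bundle $F_r$ of rank $r$ and degree zero; this can be done by writing $F_r\,\cong\,\text{Sym}^{r-1}(F_2)$ and inducing the connection from the rank two bundle $F_2$, which is flat (being the flat bundle of the unipotent representation of $\pi_1(X)\,=\,{\mathbb Z}^2$ sending one generator to a regular unipotent element of $\text{GL}_2({\mathbb C})$ and the other to the identity). Since a degree zero line bundle is flat, and Atiyah's classification \cite{At1} presents every indecomposable degree zero bundle on $X$ as $F_r\otimes L$ with $\text{degree}(L)\,=\,0$, taking direct sums recovers all semistable degree zero bundles on $X$.
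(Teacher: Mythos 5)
Your proposal is correct and takes essentially the same route as the paper: both reduce the statement to the Atiyah--Weil criterion that a bundle admits a connection if and only if every direct summand has degree zero, and your subbundle/quotient-bundle slope argument simply fills in the step the paper dismisses as clear. The GAGA remark and the explicit construction via $F_r\cong\text{Sym}^{r-1}(F_2)$ are fine but not needed beyond what the citation already provides.
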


\begin{proof}
A theorem due to Atiyah and Weil says that an algebraic vector bundle $W$ over an
irreducible smooth complex projective curve
admits an algebraic connection if and only if 
the degree of any direct summand of $W$ is zero \cite{We}, \cite[p. 202, 
Proposition 17]{At}. Since $E$ is semistable of degree zero, every direct summand 
of $E$ is clearly of degree zero. Therefore, $E$ admits an algebraic connection.
\end{proof}

\subsection{Criterion for principal bundles}\label{se2.2}

Let $G$ be a connected reductive affine algebraic group defined over $\mathbb C$.
The center of $G$ is denoted by $Z(G)$. Let
$$
Z_0(G)\, \subset\, Z(G)
$$
be the connected component containing the identity element.
The Lie algebra of $G$ will be denoted by $\mathfrak g$.

An algebraic principal $G$--bundle $E_G$ over the elliptic curve $X$ is called
\textit{semistable} if for every triple of the form $(P\, ,E_P\, ,\chi)$, where
\begin{itemize}
\item $P\, \subset\, G$ is a parabolic subgroup,

\item $E_P\, \subset\, E_G$ is an algebraic reduction of structure group of $E_G$
to $P$, and

\item $\chi\, :\, P\, \longrightarrow\, {\mathbb C}^*$ is a character such that
$\chi\vert_{Z_0(G)}$ is trivial and the line bundle on $G/P$ associated to $\chi$
is ample,
\end{itemize}
the inequality $\text{degree}(E_P(\chi))\, \geq\, 0$, where $E_P(\chi)\,\longrightarrow
\, X$ is the algebraic line bundle associated to $E_P$ for the character $\chi$.
(See \cite{Ra}, \cite{RS}, \cite{AnBi}.)

For $G\, =\, \text{GL}(r,{\mathbb C})$, the above definition of semistability of $E_G$
is equivalent to the definition of semistability of the vector bundle of rank $r$
associated to $E_G$.

Given an algebraic principal $G$--bundle $p\, :\, E_G\, \longrightarrow\, X$, we have
the Atiyah exact sequence on $X$
$$
0\,\longrightarrow\, \text{ad}(E_G)\,:=\,E_G\times^G{\mathfrak g}
\,\longrightarrow\, \text{At}(E_G)\,:=\,(p_*TE_G)^G \,\longrightarrow\, TX
\,\longrightarrow\,0
$$
(see \cite{At}). An \textit{algebraic connection} on $E_G$ is a holomorphic
(hence algebraic) splitting of the above Atiyah exact sequence.

\begin{proposition}\label{prop1}
An algebraic principal $G$--bundle $E_G$ over $X$ admits an algebraic
connection if and only if the following two conditions hold:
\begin{enumerate}
\item $E_G$ is semistable, and

\item for every character $\chi$ of $G$, the degree of the associated line bundle
$E_G(\chi)\,\longrightarrow\, X$ is zero.
\end{enumerate}
\end{proposition}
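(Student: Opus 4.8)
The plan is to prove the two implications separately; the forward one extends the proof of Lemma \ref{lem1} essentially verbatim, while the reverse one carries the real content and must invoke the classification of semistable bundles on an elliptic curve. For the ``only if'' direction, let $D$ be an algebraic connection on $E_G$. Given a character $\chi$ of $G$, $D$ induces an algebraic connection on the associated line bundle $E_G(\chi)$; an algebraic connection on a projective curve is automatically flat (as $\Omega^2_X=0$) and a line bundle with a flat connection has vanishing first Chern class, so $\mathrm{degree}(E_G(\chi))=0$, giving (2). For (1), suppose $E_G$ is not semistable and let $E_P\subset E_G$ be the canonical Harder--Narasimhan reduction to a parabolic $P\subsetneq G$. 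Exactly as in Lemma \ref{lem1}, the second fundamental form of $E_P$ with respect to $D$ is a holomorphic section of $E_P({\mathfrak g}/{\mathfrak p})\otimes K_X$; since $E_P({\mathfrak g}/{\mathfrak p})$ has all its Harder--Narasimhan slopes negative (a defining property of the canonical reduction), $H^0(X,E_P({\mathfrak g}/{\mathfrak p}))=0$, and as $K_X$ is trivial also $H^0(X,E_P({\mathfrak g}/{\mathfrak p})\otimes K_X)=0$, so this second fundamental form vanishes and $D$ restricts to an algebraic connection on $E_P$. Then $D$ induces an algebraic connection on the subbundle $E_P({\mathfrak p}_u)\subset\mathrm{ad}(E_P)$ coming from the nilpotent radical ${\mathfrak p}_u$ of ${\mathfrak p}$, forcing $\mathrm{degree}(E_P({\mathfrak p}_u))=0$; but the canonical reduction of a non-semistable bundle satisfies $\mathrm{degree}(E_P({\mathfrak p}_u))>0$. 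This contradiction shows $E_G$ is semistable.

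For the ``if'' direction, assume (1) and (2). A connection on $E_G$ is a splitting of the Atiyah sequence, so existence is obstructed by the Atiyah class $\mathrm{at}(E_G)\in H^1(X,\mathrm{ad}(E_G)\otimes K_X)=H^1(X,\mathrm{ad}(E_G))$, which must be shown to vanish. First I would remove the center: from ${\mathfrak g}={\mathfrak z}({\mathfrak g})\oplus[{\mathfrak g},{\mathfrak g}]$ as $G$-modules one gets $\mathrm{ad}(E_G)=({\mathfrak z}({\mathfrak g})\otimes{\mathcal O}_X)\oplus\mathrm{ad}(E_{G/Z(G)})$, and the ${\mathfrak z}({\mathfrak g})$-component of $\mathrm{at}(E_G)$ is determined by the numbers $\mathrm{degree}(E_G(\chi))$ as $\chi$ ranges over the characters of $G$; these vanish by (2) and their differentials span ${\mathfrak z}({\mathfrak g})^*$, so that component is zero. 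Since the Atiyah class of $E_G$ agrees with that of the induced adjoint bundle under $\mathrm{ad}(E_G)=\mathrm{ad}(E_{G/Z(G)})$, it suffices to assume $G$ of adjoint type and show that a semistable $G$-bundle on $X$ admits a connection. Here semistability makes $\mathrm{ad}(E_G)$ a semistable vector bundle of degree zero (Ramanan--Ramanathan), which therefore carries an algebraic connection by Lemma \ref{lem2}; it remains to upgrade this to a connection on $E_G$ itself.

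This last step is the main obstacle, and it is not a formal consequence of the previous ones: a semistable $G$-bundle on an elliptic curve need not reduce to a maximal torus --- already the $\mathrm{GL}(r,{\mathbb C})$-, $\mathrm{SL}(r,{\mathbb C})$- and $\mathrm{PGL}(r,{\mathbb C})$-bundles built from an Atiyah bundle $F_r$ fail to --- so one cannot just transport the obvious connections from $\mathrm{Pic}^0(X)$, and one must use the explicit classification of semistable principal bundles of degree zero on an elliptic curve: Atiyah's description of indecomposable bundles of degree zero for the general linear group, its extension to principal bundles by Friedman--Morgan--Witten and Laszlo, and the classification of commuting pairs in $G$. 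This shows that every such $E_G$ is obtained, through a homomorphism into $G$, from semistable vector bundles of degree zero, to which Lemma \ref{lem2} applies; the connections it provides then induce an algebraic connection on $E_G$ (using the usual trace correction to land in the correct structure group when necessary). Carrying out this structural reduction to Atiyah's list is where the work lies; the remaining steps are routine.
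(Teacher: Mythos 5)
Your ``only if'' direction is fine and is a legitimate variant of the paper's argument: the paper simply notes that a connection on $E_G$ induces one on $\mathrm{ad}(E_G)$, which is therefore semistable of degree zero by Lemma \ref{lem1}, and then invokes the equivalence ``$E_G$ semistable $\Leftrightarrow$ $\mathrm{ad}(E_G)$ semistable'' from \cite{AnBi}; your direct route through the canonical Harder--Narasimhan reduction reaches the same conclusion and is acceptable.

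The ``if'' direction is where the proposal breaks down. You correctly reduce to producing a connection on the vector bundle $\mathrm{ad}(E_G)$ (which exists by Lemma \ref{lem2}, since $\mathrm{ad}(E_G)$ is semistable of degree zero), but you then assert that passing from a connection on $\mathrm{ad}(E_G)$ back to a connection on the principal bundle $E_G$ is ``not a formal consequence'' and requires the Atiyah/Friedman--Morgan--Witten/Laszlo classification of semistable bundles on an elliptic curve. This is exactly backwards: that passage is the content of the theorem of Azad and Biswas (\cite[p.~444, Theorem 3.1]{AzBi}) on which the paper's proof rests, and it holds on an arbitrary compact Riemann surface --- $E_G$ admits a holomorphic connection if and only if $\mathrm{ad}(E_G)$ does and every $E_G(\chi)$ has degree zero. (The mechanism is close to what you hint at: for $G$ semisimple the Killing form exhibits $\mathfrak g$ as a $G$--module direct summand of $\mathrm{End}(\mathfrak g)$, so the Atiyah class of $E_G$ sits as a direct summand of the Atiyah class of the vector bundle $\mathrm{ad}(E_G)$ and vanishes with it; the central directions are controlled by the numbers $\deg E_G(\chi)$.) Your proposed substitute --- that every semistable $E_G$ of degree zero is induced, via a homomorphism into $G$, from semistable vector bundles of degree zero --- is not justified and is doubtful as stated: the classification results you cite describe the moduli space up to $S$--equivalence, whereas here one must handle every individual semistable bundle, not only the polystable ones; and in any case that step is left entirely as a sketch. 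As written, the hard implication is not proved.
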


\begin{proof}
We first recall a criterion for $E_G$ to admit an algebraic connection. The
principal $G$--bundle $E_G$ admits an algebraic
connection if and only if the following two conditions hold:
\begin{enumerate}
\item The adjoint vector bundle $\text{ad}(E_G)$
admits an algebraic connection, and

\item for every character $\chi$ of $G$, the degree of the associated line bundle
$E_G(\chi)\,\longrightarrow\, X$ is zero.
\end{enumerate}
(See \cite[p. 444, Theorem 3.1]{AzBi}.) On the other hand, the principal $G$--bundle
$E_G$ is semistable if and only the vector bundle $\text{ad}(E_G)$ is semistable
\cite[p. 214, Proposition 2.10]{AnBi}. Since any algebraic connection on $E_G$
induces an algebraic connection on $\text{ad}(E_G)$, the proposition follows by
combining these with Lemma \ref{lem1} and Lemma \ref{lem2}.
\end{proof}

\section{Moduli space of connections}

Let ${\mathcal M}_X$ denote the moduli space of rank one algebraic connections
on $X$, so ${\mathcal M}_X$ parametrizes isomorphism classes of pairs of
the form $(L\, ,D)$, where $L$ is an algebraic line bundle on $X$ of degree zero,
and $D$ is an algebraic connection on $L$. Let
$$
J \,=\, J(X)\,=\, \text{Pic}^0(X)
$$
be the Jacobian of $X$. Once we fix a point $x_0\,\in\, X$, there is the
isomorphism $X\, \longrightarrow\, J$ defined by $x\,\longmapsto\, {\mathcal O}_X
(x-x_0)$. Let
\begin{equation}\label{e3}
\varphi\, :\, {\mathcal M}_X\,\longrightarrow\, J
\end{equation}
be the projection defined by $(L\, ,D)\,\longmapsto\, L$. This $\varphi$ is
a smooth surjective algebraic morphism. The space of all algebraic connections
on an algebraic line bundle $L$ is an affine space for $H^0(X,\, K_X)$.
Therefore, $\varphi$ in \eqref{e3} makes
${\mathcal M}_X$ a torsor over $J$ for the trivial vector bundle $J\times
H^0(X,\, K_X)$. Equivalently, ${\mathcal M}_X$ is an algebraic principal bundle
over $J$ with structure group $H^0(X,\, K_X)$.

Using Serre duality, $H^0(X,\, K_X)$ is identified with $\mathbb C$. Therefore,
${\mathcal M}_X$ is an algebraic principal $\mathbb C$--bundle over $J$.

\begin{lemma}\label{lem3}
The algebraic principal $\mathbb C$--bundle ${\mathcal M}_X\,
\stackrel{\varphi}{\longrightarrow}\, J$ is nontrivial.
\end{lemma}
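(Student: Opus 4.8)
The plan is to use the classification of algebraic principal $\mathbb C$--bundles over $J$ (equivalently, torsors under the structure sheaf) by the group $H^1(J,{\mathcal O}_J)$, which is one--dimensional because $J$ has genus one. Triviality of the bundle $\varphi$ is equivalent to the existence of an algebraic section $s\,:\,J\,\longrightarrow\,{\mathcal M}_X$; so I would suppose such a section exists and derive a contradiction. Observe first that $s(J)$ is then a complete curve isomorphic to $J$ sitting inside ${\mathcal M}_X$, because the relation $\varphi\circ s\,=\,\text{id}_J$ forces $s$ to be a closed embedding.

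The contradiction comes from the Riemann--Hilbert correspondence (see Section~\ref{se4.1}): applied to rank--one connections it gives a biholomorphism of ${\mathcal M}_X$ with the Betti moduli space $\text{Hom}(\pi_1(X),{\mathbb C}^*)\,=\,({\mathbb C}^*)^2$, which is an affine variety. But an affine variety is a closed analytic subvariety of some ${\mathbb C}^N$, and it therefore contains no connected compact analytic subvariety of positive dimension, since each of the $N$ coordinate functions would restrict to a constant on such a subvariety. As ${\mathcal M}_X$ is biholomorphic to an affine variety, it contains no complete curve, contradicting the existence of $s(J)\,\cong\, J$. Hence $\varphi$ has no algebraic section, and the principal $\mathbb C$--bundle ${\mathcal M}_X\,\longrightarrow\, J$ is nontrivial.

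The step requiring the most attention is the use of the Riemann--Hilbert biholomorphism before Section~\ref{se4.1}; one should check that there is no circularity, but there is none, because this biholomorphism is not algebraic and says nothing directly about algebraic functions on ${\mathcal M}_X$. If a self--contained argument is preferred, one can instead compute the class of ${\mathcal M}_X$ in $H^1(J,{\mathcal O}_J)$ directly: an algebraic section of $\varphi$ over an open subset amounts to a relative algebraic connection, in the $X$--direction, on the Poincar\'e line bundle ${\mathcal P}$ on $X\times J$, so the class of ${\mathcal M}_X$ equals the component of the relative Atiyah class of ${\mathcal P}$ lying in the K\"unneth summand $H^0(X,K_X)\otimes H^1(J,{\mathcal O}_J)$ of $H^1(X\times J,p_X^*K_X)$; since ${\mathcal P}$ is the Poincar\'e bundle, this component is, under the canonical identifications $H^1(J,{\mathcal O}_J)\,\cong\,\text{Lie}\,\text{Pic}^0(J)\,\cong\,\text{Lie}\,X$ and $H^0(X,K_X)\,\cong\,(\text{Lie}\,X)^*$, the identity endomorphism of $\text{Lie}\,X$, and hence nonzero. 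Equivalently, ${\mathcal M}_X$, with the group law induced by tensor product of pairs $(L,D)$, is the universal vectorial extension of $\text{Pic}^0(X)\,=\,J$, which is a nonsplit extension because $J$ is positive--dimensional; on this second route the one nontrivial point is the computation of the relative Atiyah class of ${\mathcal P}$, everything else being routine.
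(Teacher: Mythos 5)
Your Route 1 is a correct and complete proof, but it is genuinely different from the paper's argument. The paper works directly with the classification of principal $\mathbb C$--bundles by $H^1(J,\,{\mathcal O}_J)$: it takes the $C^\infty$ section $L\,\longmapsto\,(L\, ,D^u_L)$ given by unitary flat connections, forms the $(0,1)$--form $\omega$ measuring its failure to be holomorphic, and observes that $\omega$ is translation--invariant and nonzero, hence represents a nonzero Dolbeault class. You instead reduce triviality to the existence of an algebraic (hence closed) section $s$, so that $s(J)\,\cong\, J$ would be a compact connected positive--dimensional analytic subvariety of ${\mathcal M}_X$, and then rule this out because the Riemann--Hilbert correspondence makes ${\mathcal M}_X$ biholomorphic to $\mathrm{Hom}(\pi_1(X),{\mathbb C}^*)\,=\,({\mathbb C}^*)^2$, on which the ambient coordinate functions would be constant by the maximum principle. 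Your circularity check is right: that biholomorphism is classical and independent of the lemma, and the paper itself invokes it elsewhere. What each approach buys: the paper's computation produces the actual cohomology class (in a form parallel to how Lemma \ref{lem4} and Proposition \ref{prop2} use the classification), at the cost of an ``it is straightforward to check'' assertion that $\omega\,\neq\,0$; your argument is softer and entirely avoids that computation, at the cost of importing the analytic Riemann--Hilbert input. Your Route 2 (the relative Atiyah class of the Poincar\'e bundle, identifying ${\mathcal M}_X$ with the universal vectorial extension of $J$) is the standard modern way to pin down the class and is closest in spirit to the paper's direct computation, but as you note it is only a sketch until the Atiyah class is actually computed; since Route 1 stands on its own, the proposal as a whole is sound.
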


\begin{proof}
Isomorphism classes of algebraic principal $\mathbb C$--bundles
on $J$ are parametrized by $H^1(J,\, {\mathcal O}_J)$.
Indeed, given a principal $\mathbb C$--bundle $E$ on $J$, choosing local
trivializations (in analytic topology) of $E$ and taking their differences, the
cohomology class in $H^1(J,\, {\mathcal O}_J)$ corresponding to $E$
is constructed. We will now recall the Dolbeault analog of this construction.

Let $\beta\, :\, E\,\longrightarrow\, J$ be an algebraic principal
$\mathbb C$--bundle on $J$. Since $\mathbb C$ is contractible, the
fiber bundle $E$ admits a $C^\infty$ section. Let
$$
\gamma\, :\, J\, \longrightarrow\, E
$$
be a $C^\infty$ section, meaning $\beta\circ\gamma\,=\, \text{Id}_J$. We note
that $\gamma$ need not be holomorphic; in fact, $\beta$ admits a holomorphic
section if and only if the algebraic principal $\mathbb C$--bundle $E$
is trivial. Take a point $x\,\in\, J$. Let
$$
d\gamma (x)\, :\, T^{\mathbb R}_xJ\, \longrightarrow\, T^{\mathbb R}_{\gamma(x)} E
$$
be the differential of $\gamma$ at $x$, where $T^{\mathbb R}$ denotes the
real tangent space. Let ${\mathbb J}_x$ (respectively, ${\mathbb J}_{\gamma(x)}$)
be the almost complex structure on $J$ (respectively, $E$) at $x$
(respectively, $\gamma(x)$). Now consider the homomorphism
$$
T^{\mathbb R}_xJ\, \longrightarrow\, T^{\mathbb R}_{\gamma(x)} E\, , ~\
v\,\longmapsto\, d\gamma (x)({\mathbb J}_x(v))-
{\mathbb J}_{\gamma(x)}(d\gamma (x)(v))\, .
$$
Since the map $\beta$ is holomorphic, this homomorphism produces a
homomorphism
$$
\omega(x)\, :\, T^{0,1}_xJ\, \longrightarrow\, \mathbb C\, ;
$$
using the action of $\mathbb C$ on $E$, the vertical tangent bundle on $E$
for the projection $\beta$ is algebraically identified with the trivial line bundle
with fiber $\mathbb C$ (the Lie algebra of the group $\mathbb C$).
Therefore, we get a $(0\, ,1)$--form $\omega$ on $J$ whose evaluation
at any point $x\, \in\, J$ is $\omega(x)$ defined above. We note that $\omega$ is
the obstruction for the map $\gamma$ to be holomorphic. The element in
$H^1(J,\, {\mathcal O}_J)$ represented by $\omega$ is the cohomology class
corresponding to the algebraic principal $\mathbb C$--bundle $E$.

Now consider the algebraic principal $\mathbb C$--bundle ${\mathcal M}_X$ in
\eqref{e3}. Any algebraic line bundle on $X$ of degree zero admits a unique
unitary flat connection. Let
\begin{equation}\label{ga}
\gamma\, :\, J\, \longrightarrow\, {\mathcal M}_X
\end{equation}
be $C^\infty$ section defined by $L\, \longmapsto\, (L\, , D^u_L)$, where
$D^u_L$ is the unique unitary flat connection on $L$. Consider the
$(0\, ,1)$--form $\omega$ on $J$ constructed as above from this section $\gamma$.
It is straightforward to check that
\begin{itemize}
\item $\omega$ is invariant under the translations of the group $J$, and

\item $\omega$ is nonzero.
\end{itemize}
These two imply that the cohomology class in $H^1(J,\, {\mathcal O}_J)$
represented by $\omega$ is nonzero. This completes the proof.
\end{proof}

\begin{lemma}\label{lem4}
Let $E$ and $F$ be two algebraic principal $\mathbb C$--bundles on $J$ such that
both $E$ and $F$ are nontrivial. Then the total spaces of $E$ and $F$ are
algebraically isomorphic.
\end{lemma}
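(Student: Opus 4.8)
The plan is to exploit that $J$, being the Jacobian of a genus one curve, is itself an elliptic curve, so that $H^1(J,\,{\mathcal O}_J)$ is a one-dimensional complex vector space. As recalled in the proof of Lemma \ref{lem3}, the isomorphism classes of algebraic principal $\mathbb C$--bundles on $J$ are parametrized by $H^1(J,\,{\mathcal O}_J)$, with the trivial bundle corresponding to $0$. Let $c_E,\, c_F\,\in\, H^1(J,\,{\mathcal O}_J)$ be the classes of $E$ and $F$ respectively; by hypothesis both are nonzero, and since $\dim_{\mathbb C}H^1(J,\,{\mathcal O}_J)\,=\,1$ there is a scalar $\lambda\,\in\,{\mathbb C}^*$ with $c_F\,=\,\lambda\, c_E$.

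The key point I would then establish is that multiplying a cocycle by $\lambda$ leaves the total space unchanged. Represent $E$ by an algebraic cocycle $\{g_{ij}\}$ subordinate to an open cover $\{U_i\}$ of $J$, so that $g_{ij}+g_{jk}\,=\,g_{ik}$ and the total space of $E$ is obtained from $\bigsqcup_i U_i\times{\mathbb C}$ by gluing $(x,t)\in U_i\times{\mathbb C}$ to $(x,\,t+g_{ij}(x))\in U_j\times{\mathbb C}$. Since the {\v C}ech differential is $\mathbb C$--linear, the cocycle $\{\lambda g_{ij}\}$ defines an algebraic principal $\mathbb C$--bundle whose class in $H^1(J,\,{\mathcal O}_J)$ equals $\lambda\, c_E\,=\,c_F$; hence this bundle is isomorphic, as an algebraic principal $\mathbb C$--bundle, to $F$. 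On the other hand, the fibrewise maps $U_i\times{\mathbb C}\,\longrightarrow\, U_i\times{\mathbb C}$, $(x,t)\,\longmapsto\,(x,\lambda t)$, are compatible with the two sets of gluing data, hence patch to an algebraic isomorphism from the total space of $E$ onto the total space of the bundle given by $\{\lambda g_{ij}\}$, commuting with the projections to $J$. (Invariantly, this is the identification of $E$ with the bundle obtained by extending its structure group along the group automorphism $t\,\longmapsto\,\lambda t$ of $\mathbb C$.)

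Concatenating, the total space of $E$ is algebraically isomorphic to the total space of the bundle with cocycle $\{\lambda g_{ij}\}$, which is in turn isomorphic to the total space of $F$; this will prove the lemma. The argument is purely formal, and the only slightly delicate point is the verification just indicated -- that rescaling a cocycle by $\lambda$ simultaneously multiplies its cohomology class by $\lambda$ and preserves the total space. The single substantive input is the equality $\dim_{\mathbb C}H^1(J,\,{\mathcal O}_J)\,=\,1$, which is precisely what makes the statement special to the elliptic curve case: for a Jacobian of dimension at least two, two nonzero classes need not be proportional.
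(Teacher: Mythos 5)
Your proof is correct and is essentially the argument in the paper: the paper takes the bundle $E^\lambda$ whose total space is $E$ but with the $\mathbb C$--action rescaled by $\lambda$, notes that its class is the class of $E$ divided by $\lambda$, and concludes using $\dim H^1(J,\,{\mathcal O}_J)\,=\,1$. Your cocycle computation with $\{\lambda g_{ij}\}$ and the fibrewise map $(x,t)\,\longmapsto\,(x,\lambda t)$ is exactly the \v{C}ech incarnation of that rescaling, as your own parenthetical remark observes.
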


\begin{proof}
Let $\eta\, :\, E\times {\mathbb C}\,\longrightarrow\, E$ be
the action of $\mathbb C$ on the principal $\mathbb C$--bundle $E$.
Take any nonzero complex number $\lambda$. Let $E^\lambda$ be the
algebraic principal $\mathbb C$--bundle on $J$ whose total space is
$E$ but the action of any $c\, \in\, \mathbb C$ on $E$ is the morphism
$z\,\longmapsto\, \eta(z\, ,\lambda\cdot c)$. If $\omega_0\,\in\,
H^1(J,\, {\mathcal O}_J)$ is the cohomology class corresponding to $E$, then
the cohomology class corresponding to $E^\lambda$ is $\omega_0/\lambda$.
Since $\dim H^1(J,\, {\mathcal O}_J)\,=\, 1$, it now follows that the
total spaces of any two nontrivial algebraic principal $\mathbb C$--bundles on $J$ 
are algebraically isomorphic.
\end{proof}

Let
\begin{equation}\label{V}
0\,\longrightarrow\, {\mathcal O}_J \,\longrightarrow\, V\,
\stackrel{\sigma}{\longrightarrow}\, {\mathcal O}_J \,\longrightarrow\, 0
\end{equation}
be a nontrivial extension of ${\mathcal O}_J$ by ${\mathcal O}_J$; such an
extension exists because $H^1(J,\, {\mathcal O}_J)\,\not=\, 0$. Let
$$
s_1\,:\, J\, \longrightarrow\, {\mathcal O}_J
$$
be the section given by the constant function $1$ on $J$. Define
\begin{equation}\label{Z}
Z\,:=\, \sigma^{-1}(s_1(J))\,\subset\, V\, ,
\end{equation}
where $\sigma$ is the homomorphism in \eqref{V}. Using the inclusion
$$
{\mathcal O}_J \,\hookrightarrow\, Z\,\subset\, V
$$
in \eqref{V} it follows that $Z$ is an algebraic principal $\mathbb C$--bundle over $J$.

\begin{proposition}\label{prop2}
The variety $Z$ in \eqref{Z} is isomorphic to ${\mathcal M}_X$.
\end{proposition}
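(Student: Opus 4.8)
The plan is to identify $Z$ as a nontrivial algebraic principal $\mathbb C$--bundle over $J$, and then to conclude by appealing to Lemma \ref{lem3} and Lemma \ref{lem4}.

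First I would record the principal $\mathbb C$--bundle structure on $Z$ explicitly. For a point $v\in Z$ lying over $x\in J$ and an element $c$ of the fibre ${\mathcal O}_{J,x}\,\subset\, V_x$ coming from the inclusion in \eqref{V}, the sum $v+c$, formed in the vector space $V_x$, again satisfies $\sigma(v+c)\,=\,\sigma(v)\,=\,1$, hence lies in $Z$; this fibrewise addition is the action of $\mathbb C$ on $Z$, so $Z$ is a torsor over the sheaf ${\mathcal O}_J$.

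The key step is to show that this principal $\mathbb C$--bundle is nontrivial. A global algebraic section of $Z\,\longrightarrow\, J$ is precisely a global section $w\in H^0(J,\,V)$ of the vector bundle $V$ satisfying $\sigma(w)\,=\,s_1$; any such $w$ provides a splitting of the exact sequence \eqref{V}, via $f\,\longmapsto\, f\cdot w$. Since \eqref{V} is assumed to be a nontrivial extension, no such section exists; as an algebraically trivial principal $\mathbb C$--bundle would admit an algebraic section, we conclude that $Z\,\longrightarrow\, J$ is nontrivial. (Alternatively, the class in $H^1(J,\,{\mathcal O}_J)$ classifying the ${\mathcal O}_J$--torsor $Z$ equals the image of $1\in H^0(J,\,{\mathcal O}_J)$ under the connecting homomorphism of the long exact sequence associated to \eqref{V}, which is the extension class of $V$, hence nonzero.)

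Finally, Lemma \ref{lem3} asserts that the algebraic principal $\mathbb C$--bundle ${\mathcal M}_X\,\stackrel{\varphi}{\longrightarrow}\, J$ is nontrivial, so Lemma \ref{lem4}, applied to the two nontrivial algebraic principal $\mathbb C$--bundles $Z$ and ${\mathcal M}_X$ over $J$, yields an algebraic isomorphism between their total spaces; this is the claimed isomorphism $Z\,\cong\,{\mathcal M}_X$. I do not expect a serious obstacle: the one point worth care is the identification of a section of the affine bundle $Z$ over $J$ with a section of the vector bundle $V$ mapping to $s_1$, and hence with a splitting of \eqref{V}; the remaining content is a direct invocation of the preceding lemmas.
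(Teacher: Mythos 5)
Your proof is correct and follows essentially the same route as the paper: identify $Z$ as a principal $\mathbb C$--bundle over $J$ whose class in $H^1(J,\,{\mathcal O}_J)$ is the extension class of \eqref{V}, hence nonzero, and then conclude via Lemma \ref{lem3} and Lemma \ref{lem4}. Your additional observation that a global section of $Z$ would split \eqref{V} is a clean elementary justification of the same nontriviality claim.
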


\begin{proof}
We will show that the algebraic principal $\mathbb C$--bundle $Z\,\longrightarrow\,
J$ is nontrivial. Since local trivializations of $Z$ give local splittings of
the exact sequence in \eqref{V}, the cohomology class in $H^1(J,\, {\mathcal O}_J)$
corresponding to the principal $\mathbb C$--bundle $Z$ coincides with the cohomology
class corresponding to the extension in \eqref{V}. Since the extension in \eqref{V}
is non-split, we conclude that the cohomology class in $H^1(J,\, {\mathcal O}_J)$
corresponding to the principal $\mathbb C$--bundle $Z$ is nonzero. Now the
proposition follows from Lemma \ref{lem3} and Lemma \ref{lem4}.
\end{proof}

\begin{proposition}\label{prop3}
The variety ${\mathcal M}_X$ does not admit any nonconstant algebraic function.
\end{proposition}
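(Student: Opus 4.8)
The plan is to exploit the description of $\mathcal M_X$ as the total space of a \emph{nontrivial} algebraic principal $\mathbb C$--bundle $\varphi\colon\mathcal M_X\to J$ over the connected projective variety $J$ (the nontriviality is exactly Lemma \ref{lem3}; equivalently, by Proposition \ref{prop2}, one may run the argument on $Z$). The idea is that any algebraic function on $\mathcal M_X$ is a polynomial along the fibres of $\varphi$, and the obstruction to successively lowering its fibre--degree is a nonzero scalar multiple of the cohomology class of the $\mathbb C$--bundle; since that class does not vanish, the fibre--degree is forced to be $0$, and a function of fibre--degree $0$ descends to $J$, hence is constant.

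First I would set up local coordinates: since $\mathbb C$ is the additive group, $\varphi$ is Zariski--locally trivial, so one can choose a \emph{finite} affine open cover $\{U_i\}$ of $J$ together with trivializations $\varphi^{-1}(U_i)\cong U_i\times\mathbb A^1$ with fibre coordinate $z_i$, whose transition maps over $U_i\cap U_j$ are translations $z_i=z_j+g_{ij}$, where $\{g_{ij}\}$ is a $1$--cocycle with values in $\mathcal O_J$ whose class $[\{g_{ij}\}]\in H^1(J,\mathcal O_J)$ is nonzero. Next, given a global algebraic function $h$ on $\mathcal M_X$, it restricts over each $U_i$ to a polynomial $h=\sum_{k\leq d_i}a^{(i)}_k z_i^{\,k}$ with $a^{(i)}_k\in\mathcal O(U_i)$; as the cover is finite, $d:=\max_i d_i$ is finite. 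Because the transition maps are translations (which preserve degree and leading coefficient), the coefficients $a^{(i)}_d$ agree on overlaps, hence patch to a global section of $\mathcal O_J$, which is a constant $c\in\mathbb C$. If $c=0$ we may decrease $d$, so we reduce to the case where either $d=0$ (and then $h$ is pulled back from $J$, hence constant) or $d\geq 1$ and $c\neq 0$. In the latter case, substituting $z_i=z_j+g_{ij}$ into $h=\sum_{k\leq d}a^{(i)}_k z_i^{\,k}$ and comparing the coefficients of $z_j^{\,d-1}$ over $U_i\cap U_j$ yields $a^{(j)}_{d-1}-a^{(i)}_{d-1}=c\,d\,g_{ij}$. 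Thus $\{c\,d\,g_{ij}\}$ is a coboundary, i.e.\ the zero class in $H^1(J,\mathcal O_J)$; but $c\,d\neq 0$ (we are in characteristic zero and $d\geq 1$), so this forces $[\{g_{ij}\}]=0$, contradicting Lemma \ref{lem3}. Hence $d=0$ and $h$ is constant.

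I do not expect a genuine obstacle here; the only point that must be gotten right is the coefficient comparison in the last step, which identifies the obstruction to decreasing the fibre--degree with $c\,d$ times the class of the $\mathbb C$--bundle. The same computation can be packaged cohomologically: $\varphi_*\mathcal O_{\mathcal M_X}$ is the increasing union of the locally free sheaves $\mathcal F_d$ of fibrewise polynomials of degree $\leq d$, with $\mathcal F_0=\mathcal O_J$ and short exact sequences $0\to\mathcal F_{d-1}\to\mathcal F_d\to\mathcal O_J\to 0$; one checks that the corresponding class in $\mathrm{Ext}^1(\mathcal O_J,\mathcal F_{d-1})$ maps, under $\mathcal F_{d-1}\to\mathcal F_{d-1}/\mathcal F_{d-2}\cong\mathcal O_J$, to a nonzero multiple of $[\{g_{ij}\}]$, hence is nonzero, so the connecting map $H^0(\mathcal O_J)\to H^1(\mathcal F_{d-1})$ is injective and $H^0(J,\mathcal F_d)=\mathbb C$ for all $d$ by induction on $d$ together with the long exact cohomology sequence. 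Passing to the colimit gives $H^0(\mathcal M_X,\mathcal O)=\mathbb C$.
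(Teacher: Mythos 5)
Your proof is correct, and it takes a genuinely different route from the paper. The paper compactifies the total space: it identifies ${\mathcal M}_X$ with $Z\,=\,{\mathbb P}(V)\setminus S$ (via Proposition \ref{prop2} and Lemma \ref{lem4}), proves ${\mathcal O}_{\mathbb P}(S)\,=\,{\mathcal O}_{\mathbb P}(1)$ using the see--saw theorem and the Poincar\'e adjunction formula, reduces to computing $H^0(J,\,\text{Sym}^n(V))$ by the projection formula, and then quotes Atiyah's theorem that these spaces are one--dimensional. You instead work directly with the \v{C}ech description of the nontrivial principal $\mathbb C$--bundle from Lemma \ref{lem3}: a global function is fibrewise polynomial of bounded degree over a finite affine trivializing cover, its leading coefficient is a constant $c$, and comparing coefficients of $z_j^{d-1}$ under the translation transition maps shows that $c\,d\,[\{g_{ij}\}]$ is a coboundary, forcing $d\,=\,0$ since $c\,d\,\neq\,0$ in characteristic zero. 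Your coefficient computation is right ($a^{(j)}_{d-1}-a^{(i)}_{d-1}\,=\,c\,d\,g_{ij}$ follows from expanding $(z_j+g_{ij})^d$), and your cohomological repackaging via the filtration $\mathcal F_d$ of $\varphi_*{\mathcal O}_{{\mathcal M}_X}$ amounts to a self--contained proof of the statement $\dim H^0(J,\,\text{Sym}^n(V))\,=\,1$ that the paper imports from \cite{At1}. What your approach buys is economy: it needs only Lemma \ref{lem3}, bypassing Proposition \ref{prop2}, the projective bundle, the see--saw theorem and Atiyah's classification. What the paper's approach buys is the geometric picture of ${\mathcal M}_X$ as the complement of a section in a ruled surface over $J$ and the explicit link to Atiyah's bundles $F_n$. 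It is worth noting that your argument makes visible where characteristic zero enters (the nonvanishing of $c\,d$); the paper's reliance on \cite{At1} hides this.
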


\begin{proof}
Let
\begin{equation}\label{P}
p\, :\,{\mathbb P}\,:=\, {\mathbb P}(V)\,\longrightarrow\, J
\end{equation}
be the projective bundle over $J$ associated to $V$ in \eqref{V}. Note that the
projection $\sigma$ in \eqref{V} defines a section of the projective bundle
${\mathbb P}$ in \eqref{P}. The image of this section will be denoted by $S$. We have
\begin{equation}\label{i}
{\mathbb P}\setminus S\,=\, Z\, ,
\end{equation}
where $Z$ is constructed in \eqref{Z}. Let ${\mathcal O}_{\mathbb P} (1)$ denote
the tautological quotient line bundle of
$p^*V$. For any $n\, \geq\,1$, the tensor product ${\mathcal O}_{\mathbb P}
(1)^{\otimes n}$ will be denoted by ${\mathcal O}_{\mathbb P} (n)$.

The restrictions of both ${\mathcal O}_{\mathbb P}(1)$ and ${\mathcal O}_{\mathbb P}(S)$
to any fiber of $p$ are of degree one. Therefore, from the see--saw theorem,
\cite[p. 51, Corollary 6]{Mu}, we know
that there is a line bundle $\xi$ on $J$ such that
\begin{equation}\label{P2}
{\mathcal O}_{\mathbb P} (1)\otimes p^*\xi\,=\, {\mathcal O}_{\mathbb P}(S)\, .
\end{equation}
The restriction of ${\mathcal O}_{\mathbb P} (1)$ to $S$ is the trivial line bundle
because the quotient line bundle in \eqref{V} is trivial. From the Poincar\'e
adjunction formula we know that the restriction of ${\mathcal O}_{\mathbb P}(S)$ to
$S$ is the normal bundle to $S$ (see \cite[p. 146]{GH} for
the Poincar\'e adjunction formula). The normal bundle to $S$ is clearly identified
with the restriction to $S$ of the relative tangent bundle for the projection $p$
in \eqref{P}. From \eqref{V} we know that the restriction to $S$ of the
relative tangent bundle for $p$ is $Hom({\mathcal O}_J\, ,{\mathcal O}_J)\,=\,
{\mathcal O}_J$. Since both ${\mathcal O}_{\mathbb P} (1)\vert_S$ and
${\mathcal O}_{\mathbb P}(S)\vert_S$ are trivial, from \eqref{P2} we conclude that
the line bundle $\xi$ is trivial. Hence we have
\begin{equation}\label{P3}
{\mathcal O}_{\mathbb P} (1)\,=\, {\mathcal O}_{\mathbb P}(S)\, .
\end{equation}
So, ${\mathcal O}_{\mathbb P}(n)\,=\, {\mathcal O}_{\mathbb P} (n\cdot S)$
for all $n\, \geq\, 1$. Therefore, the projection formula says that
$$
p_*{\mathcal O}_{\mathbb P}(n\cdot S)\,=\, p_* ({\mathcal O}_{\mathbb P} (n))\,=\,
\text{Sym}^n(V)\, .
$$
This implies that
\begin{equation}\label{e6}
H^0({\mathbb P},\, {\mathcal O}_{\mathbb P}(n\cdot S))\,=\,
H^0(J,\, f_*({\mathcal O}_{\mathbb P}(n\cdot S)))\,=\, H^0(J,\,\text{Sym}^n(V))\, .
\end{equation}
The vector bundle $\text{Sym}^n(V)$ is isomorphic to the vector bundle $F_{n+1}$
in \cite[p. 432, Theorem 5]{At1}. It is shown in \cite{At1} that
$$
\dim H^0(J,\, F_{n})\,=\, 1
$$
(this is proved in \cite[p. 430, Lemma 15]{At1}; see also the penultimate line
in \cite[p. 432]{At1}). Therefore, from \eqref{e6} we conclude that
$$
H^0({\mathbb P},\, {\mathcal O}_{\mathbb P}(n\cdot S))\,=\, \mathbb C\, \ \ \ \forall \
n \, >\, 0\, .
$$
Now from \eqref{i} it follows that
$$
H^0(Z,\, {\mathcal O}_{Z})\,=\, \mathbb C\, .
$$
In view of this, the proposition follows from Proposition \ref{prop2}.
\end{proof}

\section{Functions of moduli space of $G$--connections}

Let $G$ be a connected reductive affine algebraic group over $\mathbb C$.
Let ${\mathcal M}_X(G)$ denote the moduli space of topologically
trivial algebraic $G$--connections on
$X$, meaning pairs of the form $(E_G\, , D)$, where $E_G$ is an algebraic
principal $G$--bundle on $X$ such that $E_G$ is topologically trivial,
and $D$ is an algebraic connection on $E_G$.

\begin{theorem}\label{thm1}
The variety ${\mathcal M}_X(G)$ does not admit any nonconstant algebraic function.
\end{theorem}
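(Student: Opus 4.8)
The plan is to reduce the reductive case to the already-established rank-one case (Proposition \ref{prop3}) by exploiting the structure theory of $G$. First I would recall the explicit description of ${\mathcal M}_X(G)$. By Proposition \ref{prop1}, a topologically trivial $E_G$ admits an algebraic connection if and only if it is semistable with all associated line bundles $E_G(\chi)$ of degree zero; on an elliptic curve, the semistable topologically trivial $G$-bundles admitting such connections are, up to isomorphism, those induced from the maximal torus $T\subset G$ via topologically trivial degree-zero line bundles (cf. \cite{La}, \cite{FM}, \cite{FMW}). Concretely, there should be a finite group $W$ (a subquotient of the Weyl group, or the relevant component group) acting on a product ${\mathcal M}_X^{\,r}$ of copies of the rank-one connection moduli space — one copy for each factor in a maximal torus of rank $r$ — together with possibly a contribution from the semisimple part, so that ${\mathcal M}_X(G)$ is the GIT/quotient $\big({\mathcal M}_X^{\,r}\times(\text{something})\big)/W$, or at least admits a finite surjective morphism from such a space. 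I would make this identification precise using the cited bundle-theoretic results and Proposition \ref{prop1}.

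Next I would observe that a regular function on ${\mathcal M}_X(G)$ pulls back, along the finite surjective morphism from the torus-type moduli space, to a $W$-invariant regular function; and a regular function on a product $Y_1\times\cdots\times Y_k$ of varieties each carrying only constant global functions is again constant, since $H^0(Y_1\times\cdots\times Y_k,{\mathcal O})=\bigotimes_i H^0(Y_i,{\mathcal O})=\mathbb C$ by the Künneth formula for coherent cohomology. Here each factor is either a copy of ${\mathcal M}_X$, which has no nonconstant algebraic function by Proposition \ref{prop3}, or — for the connections on the fixed semisimple part — a point or an affine space quotient that one checks directly has only constant functions. Hence the pulled-back function is constant, and since the morphism is surjective the original function is constant.

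The main obstacle, and the step requiring the most care, is the first one: pinning down the precise structure of ${\mathcal M}_X(G)$ as (a finite quotient of) a product built out of ${\mathcal M}_X$, and in particular showing that \emph{every} point of ${\mathcal M}_X(G)$ is hit — i.e. that there is no stratum of connections on strictly unstable or non-torus-induced bundles. This is exactly where Proposition \ref{prop1} does the work: connections only exist on semistable bundles with trivial determinant-type invariants, and on an elliptic curve such $G$-bundles are classified (following Atiyah for vector bundles and Friedman--Morgan--Witten / Laszlo for general $G$) as being S-equivalent to those reduced to a maximal torus. One must also handle the fact that ${\mathcal M}_X(G)$ parametrizes $(E_G,D)$ up to isomorphism, so the connection data contributes the affine $H^0(X,K_X)$-torsor structure fiberwise, which is precisely what makes each torus-factor a copy of ${\mathcal M}_X$ rather than of $J$. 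Once the morphism $\pi:\big({\mathcal M}_X\big)^{r}\times(\cdots)\!\longrightarrow\!{\mathcal M}_X(G)$ is constructed and shown to be finite and surjective with $W$ acting so that $\pi_*{\mathcal O}$ has $W$-invariants equal to ${\mathcal O}_{{\mathcal M}_X(G)}$, the function-theoretic conclusion is immediate from Proposition \ref{prop3} and Künneth.
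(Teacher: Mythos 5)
Your proposal is correct and follows essentially the same route as the paper: reduce to the maximal torus ${\mathcal T}\subset G$, identify ${\mathcal M}_X({\mathcal T})$ with $({\mathcal M}_X)^{\dim{\mathcal T}}$, apply Proposition \ref{prop3} together with K\"unneth, and conclude by surjectivity of ${\mathcal M}_X({\mathcal T})\longrightarrow{\mathcal M}_X(G)$. The one step you flag as the ``main obstacle'' --- that every $G$--connection is reached from a ${\mathcal T}$--connection --- is settled in the paper not by redoing the elliptic-curve bundle classification but by citing Thaddeus's theorem on commuting elements of Lie groups \cite[p.\ 1177, Theorem 4.1]{Th}; also note that plain surjectivity suffices, so the finite-quotient/$W$-invariance structure you describe is not needed.
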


\begin{proof}
We saw in Proposition \ref{prop3} that the variety ${\mathcal M}_X$
does not admit any nonconstant algebraic function. Therefore, for any positive
integer $d$, the Cartesian product $({\mathcal M}_X)^d$ does not admit any
nonconstant algebraic function.

Fix a maximal torus ${\mathcal T}\, \subset\, G$. Let ${\mathcal M}_X({\mathcal T})$
be the moduli space of ${\mathcal T}$--connections on $X$, meaning pairs of the
form $(E_{\mathcal T}\, ,D)$, where $E_{\mathcal T}$ is an algebraic principal
${\mathcal T}$--bundle on $X$ and $D$ is an algebraic connection on $E_{\mathcal T}$.
We note that since ${\mathcal T}$ is a product of copies of
the multiplicative group ${\mathbb C}^*\,=\, {\mathbb C}\setminus \{0\}$, if an
algebraic principal ${\mathcal T}$--bundle $E_{\mathcal T}$ admits an algebraic
connection, then $E_{\mathcal T}$ is topologically trivial. The inclusion of $\mathcal T$
in $G$ produces a morphism
$$
\psi\, :\, {\mathcal M}_X({\mathcal T})\,\longrightarrow\, {\mathcal M}_X(G)\, .
$$
This morphism $\psi$ is known to be surjective \cite[p. 1177, Theorem 4.1]{Th}.

Let $\delta$ denote the dimension of ${\mathcal T}$, so ${\mathcal T}$ is isomorphic
to $({\mathbb C}^*)^\delta$. Therefore, ${\mathcal M}_X({\mathcal T})$ is isomorphic to
the variety $({\mathcal M}_X)^\delta$. Since $({\mathcal M}_X)^\delta$ does not admit
any nonconstant algebraic function, and $\psi$ is surjective, we conclude that
${\mathcal M}_X(G)$ does not admit any nonconstant algebraic function.
\end{proof}

\subsection{The Betti moduli space}\label{se4.1}

We now recall the definition of the Betti moduli space $M^B_X(G)$ associated to the
pair $(X\, ,G)$ \cite{Si}. The identity element of $G$ will be denoted by $e$. We will
identify $\pi_1(X)$ with ${\mathbb Z}\bigoplus {\mathbb Z}$. Consider the morphism
$$
f\, :\, G\times G\,\longrightarrow\, G\, , ~\ (x\, ,y)\, \longmapsto\,
xyx^{-1}y^{-1}\, .
$$
Let ${\mathcal R}_X(G)$ denote the connected component, containing $(e\, ,e)$, of
$f^{-1}(e)$. It is an affine variety because $G$ is so. The simultaneous conjugation
action of $G$ on $G\times G$ preserves ${\mathcal R}_X(G)$. The geometric invariant
theoretic quotient ${\mathcal R}_X(G)/\!\!/G$ is the Betti moduli space $M^B_X(G)$.
It is an affine variety. The Riemann--Hilbert correspondence produces a biholomorphism
between ${\mathcal M}_X(G)$ and $M^B_X(G)$; it sends an algebraic connection to
the monodromy of the corresponding flat connection.

In contrast with Theorem \ref{thm1}, the variety ${\mathcal M}_X(G)$ has plenty
of holomorphic functions because it is biholomorphic to an affine variety.



\begin{thebibliography}{ZZZZ}

\bibitem[AnBi]{AnBi} B. Anchouche and I. Biswas, Einstein--Hermitian
connections on polystable principal bundles over a compact
K\"ahler manifold, \textit{Amer. Jour. Math.} \textbf{123}
(2001), 207--228.

\bibitem[AzBi]{AzBi} H. Azad and I. Biswas, On holomorphic principal bundles
over a compact Riemann surface admitting a flat connection, II, {\it Bull.
London Math. Soc.} {\bf 35}, (2003), 440--444.

\bibitem[At1]{At1} M. F. Atiyah, Vector bundles over an elliptic curve, {\it Proc.
London Math. Soc.} {\bf 7} (1957), 414--452.

\bibitem[At2]{At} M. F. Atiyah, Complex analytic connections in fibre
bundles, \textit{Trans. Amer. Math. Soc.} \textbf{85} (1957), 181--207.

\bibitem[FGN]{FGN} E. Franco, O. Garc{\'\i}a-Prada and P. E. Newstead, Higgs
bundles over elliptic curves, \textit{Ill. Jour. Math.} (in press).

\bibitem[FrMo]{FM} R. Friedman and  J. W. Morgan, Holomorphic principal bundles
over elliptic curves. II. The parabolic construction, {\it Jour. Diff. Geom.}
{\bf 56} (2000), 301--379. 

\bibitem[FMW]{FMW} R. Friedman, J. W. Morgan and E. Witten, Principal $G$-bundles
over elliptic curves, {\it Math. Res. Lett.} {\bf 5} (1998), 97--118. 

\bibitem[GrHa]{GH} P. Griffiths and J. Harris, \textit{Principles of algebraic geometry},
Reprint of the 1978 original, Wiley Classics Library. John Wiley \& Sons, Inc.,
New York, 1994.

\bibitem[La]{La} Y. Laszlo, About $G$-bundles over elliptic curves, {\it Ann. Inst.
Fourier} {\bf 48} (1998), 413--424.

\bibitem[Mu]{Mu} D. Mumford, \textit{Abelian varieties},
with appendices by C. P. Ramanujam and Yuri Manin, corrected reprint of the second
(1974) edition. Tata Institute of Fundamental Research Studies in Mathematics, 5,
Hindustan Book Agency, New Delhi, 2008. 

\bibitem[Ra]{Ra} A. Ramanathan, Stable
principal bundles on a compact Riemann surface, \textit{Math. Ann.}
\textbf{213} (1975), 129--152.

\bibitem[RaSu]{RS} A. Ramanathan and S. Subramanian, Einstein--Hermitian
connections on principal bundles and stability, \textit{Jour. Reine Angew.
Math.} \textbf{390} (1988), 21--31.

\bibitem[Si]{Si} C. T. Simpson, Moduli of representations of the fundamental group
of a smooth projective variety. II, {\it Inst. Hautes \'Etudes Sci. Publ. Math.}
{\bf 80} (1994), 5--79.

\bibitem[Th]{Th} M. Thaddeus, Mirror symmetry, Langlands duality, and commuting
elements of Lie groups, {\it Internat. Math. Res. Not.} (2001), 1169--1193.

\bibitem[We]{We} A. Weil, Generalisation des fonctions abeliennes,
\textit{Jour. Math. Pure Appl.} \textbf{17} (1938), 47--87.

\end{thebibliography}
\end{document}